\documentclass[11pt, oneside]{article} 

\usepackage{geometry}                		% See geometry.pdf to learn the layout options. There are lots.
\geometry{letterpaper}                   		% ... or a4paper or a5paper or ... 
\usepackage{graphicx}				% Use pdf, png, jpg, or eps§ with pdflatex; use eps in DVI mode
				
\usepackage{subfigure}							 
\usepackage{multirow}
\usepackage{amsmath,amssymb,mathtools}
\usepackage{amsthm}
\usepackage{booktabs}

\usepackage{algpseudocode}
\usepackage{algorithm}
\usepackage{float}
\usepackage{multirow}
\usepackage{tikz,pgfplots}
\usepackage{textcomp}

\newtheorem{theorem}{Theorem}[section]
\newtheorem{remark}[theorem]{Remark}

\newcommand{\inner}[2]{\left\langle #1,#2 \right\rangle}
\newcommand{\norm}[1]{\left\lVert#1\right\rVert}

\newcommand{\innert}[2]{\langle #1,#2 \rangle}
\newcommand{\normt}[1]{\lVert#1\rVert}

% \mathbf

% \mathrm

\newcommand{\rmd}{\mathrm{d}}

% \mathbb

\newcommand{\bE}{\mathbb{E}}

\newcommand{\bN}{\mathbb{N}}

\newcommand{\bP}{\mathbb{P}}

\newcommand{\bR}{\mathbb{R}}

% \mathfrak

\newcommand{\fR}{\mathfrak{R}}

% \mathcal
\newcommand{\cA}{\mathcal{A}}

\newcommand{\cI}{\mathcal{I}}

\newcommand{\cO}{\mathcal{O}}
\newcommand{\cP}{\mathcal{P}}

\newcommand{\cR}{\mathcal{R}}

% \mathscr

\title{Weakly intrusive low-rank approximation method \\
  for nonlinear parameter-dependent equations}
\author{Loic Giraldi\thanks{Division of Computer, Electrical and Mathematical Sciences and Engineering, King Abdullah University of Science and Technology, Thuwal, Saudi Arabia. E-mail: {loic.giraldi@kaust.edu.sa}.}~ and   Anthony Nouy\thanks{Ecole Centrale de Nantes, Department of Computer Science and Mathematics, 
       LMJL, CNRS UMR 6629, Nantes, France. Email: anthony.nouy@ec-nantes.fr.} \thanks{This research was supported by the French National Research Agency (grant ANR CHORUS MONU-0005).}}

\date{}

\begin{document}
\maketitle

\begin{abstract}
   This paper presents a weakly intrusive strategy for computing a low-rank approximation of the solution of a  system of nonlinear parameter-dependent equations. The proposed strategy relies on a Newton-like iterative solver which only requires evaluations of the residual of the parameter-dependent  equation and of a preconditioner (such as the differential of the residual)  for instances of the parameters independently.  The algorithm provides an approximation of the set of solutions associated with a possibly large  number of instances of the parameters, with a computational complexity which can be orders of magnitude lower than when using the same Newton-like  solver for all instances of the parameters.  The reduction of complexity  requires efficient strategies for obtaining low-rank approximations of the   residual, of the preconditioner, and of the increment at each iteration of   the algorithm. For the approximation of the residual and the preconditioner, weakly intrusive variants of the empirical interpolation method are introduced, which require  evaluations of entries of the residual and the preconditioner.  Then, an approximation of the increment  is obtained by using a greedy algorithm for low-rank approximation, and a   low-rank approximation of the iterate is finally obtained by using a  truncated singular value decomposition. When the preconditioner is the differential of the residual, the proposed algorithm is interpreted as an inexact Newton solver for which a detailed convergence analysis is provided.   Numerical examples illustrate the efficiency of the method.\end{abstract}

\noindent\textbf{Keywords:} model order reduction, non-intrusive,
  low-rank approximation, inexact Newton solver, empirical interpolation
  method, singular value decomposition.%\\[3pt]
%\noindent\textbf{2010 AMS Subject Classifications:} 15A69, 41A05, 41A63, 65D15, 65J99

\section{Introduction}
\label{sec:introduction}

The purpose of this paper is to propose weakly intrusive variants of model order reduction methods for the efficient  solution
of a system of nonlinear equations
\begin{equation}
  \label{eq:residual_equation}
  R(u(\xi);\xi) = 0
\end{equation}
whose solution $u(\xi)\in \mathbb{R}^N$ depends on parameters
$\xi$ taking values in a finite set $\Xi$. The set $\Xi$ is here supposed to be given and fixed. It depends on the purpose of the parameter-dependent analysis. It may be a set of random samples (e.g., for statistical learning), a set of interpolation points (e.g. sparse grids), a set of integration points... The proposed approach is then complementary to approximation or integration methods for parameter-dependent functionals.
The parameter-dependent solution is assumed to admit an accurate approximation   of the form
\begin{equation*}
 u(\xi) \approx \sum_{i=1}^r v_i \lambda_i(\xi),
\end{equation*}
where the set of parameter-independent vectors $ v_1, \hdots,v_r$ constitutes a reduced basis in $\mathbb{R}^N$. When identifying $u$ with a tensor in $\mathbb{R}^N \otimes \mathbb{R}^{\#\Xi}$, this can be interpreted as a rank-$r$ approximation of $u$.
Model order reduction methods are usually classified as intrusive if numerical codes for parameter-independent equations can not be used as pure
black-boxes. In~\cite{Giraldi2014} and~\cite{Giraldi2015}, the authors consider the solution of stochastic nonlinear equations with a stochastic Galerkin method, usually qualified as intrusive.
The notion of intrusiveness was relaxed by allowing
the access to pointwise evaluations of the
residual of the equation, therefore resulting in a non (or say weakly) intrusive implementation of stochastic Galerkin methods.  Here, we adopt a similar point of view.

We assume that we have a numerical code which for a given instance of $\xi$  generates a sequence
of approximations $(u^k(\xi))_{k\ge 1}$ converging to $u(\xi)$, and we further assume that we have
access to more or less detailed information from this numerical code.  More precisely,  we consider
Newton-type iterations \begin{equation} \label{eq:newton_solver} u^{k+1}(\xi) =  u^k(\xi) +
P(u^k(\xi);\xi)^{-1} R(u^k(\xi);\xi), \end{equation} and we assume that we have access to
evaluations of the residual $R(u^k(\xi);\xi)\in \mathbb{R}^N$ and the preconditioner
$P(u^k(\xi);\xi)\in \mathbb{R}^{N\times N}$ (such as the differential of the residual), or some of
their entries.  A classical approach consists in using the iterative
algorithm~\eqref{eq:newton_solver} for each instance of $\xi$ independently. Here, we formally
apply the iterative algorithm for all values of $\xi$ simultaneously and introduce an additional
truncation step  in order to generate a sequence of low-rank iterates.  The approach is similar to
truncated iterative methods introduced for the solution for tensor-structured linear equations
in~\cite{Bachmayr2015, Ballani2013, Giraldi2014a, Kressner2011a}. The resulting algorithm takes the
form \begin{equation} u^{k+1}(\cdot) =  \Pi_\varepsilon( u^k(\cdot) + \widetilde
P(u^k(\cdot);\cdot)^{-1} \widetilde R(u^k(\cdot);\cdot)), \end{equation} where $\widetilde
R(u^k(\cdot);\cdot)$ and $\widetilde P(u^k(\cdot);\cdot)$ are low-rank approximations of $
R(u^k(\cdot);\cdot)$ and $ P(u^k(\cdot);\cdot)$, and where $\Pi_\varepsilon$ is a truncation
operator such that $\Pi_\varepsilon(v)$ provides a low-rank approximation of a function $v$
with a controlled precision $\varepsilon$. The algorithm provides a low-rank approximation of the
solution of the parameter-dependent equation without using any snapshot of the solution. Assuming
that the residual and the preconditioner admit accurate approximations with a low rank, a limited
information on these quantities (i.e.\ a small number of evaluations of their entries) is
sufficient to construct these approximations, which can yield a significant reduction of complexity
when compared to a classical approach. Here, we rely on variants of the empirical interpolation
method (EIM)~\cite{Barrault2004} for the construction of these approximations.

In contrast to~\cite{Kunisch2002, Chaturantabut2010, Negri2015}, the result of the method is not a
reduced order model which is then evaluated in an online phase, but an approximation of the
solution of a possibly large set of samples.  However, if the samples are a set of quadrature
points, interpolation points, or random samples, standard integration, interpolation or
least-squares methods can then be used to obtain a representation of the parameter-dependent
solution in a suitable approximation format.  \bigskip

The paper is organized as follows.  In Section~\ref{sec:appr-resid-prec}, we consider the
approximation of the residual  in the particular case where for a given $v(\xi)$ with low-rank
representation, we have a partial knowledge on the low-rank representation of the residual
$R(v(\xi);\xi)$.  In this case, we introduce a variation of the approach proposed
in~\cite{Casenave2014} in order to compute a low-rank representation of this residual with a
rigorous control of the error. The same approach can be used for obtaining an approximation of the
preconditioner using a partial knowledge on its low-rank representation. In
Section~\ref{sec:appr-struct-resid}, we consider the approximation of the residual and the
preconditioner without a priori knowledge on their representations as parameter-dependent algebraic
quantities, and we propose an approximation method which requires simple evaluations of entries of
these quantities. The approach relies on the EIM for vector- or matrix-valued parameter-dependent
functions (see e.g.~\cite{Negri2015} for the matrix-valued case), and includes a statistical
control of the error.  Note that the proposed approach differs from  the discrete EIM proposed
in~\cite{Chaturantabut2010} in that it does not require the evaluations of samples of the solution
to compute a reduced basis for its low-rank representation, and it includes a rigorous control of
the error.  In Section~\ref{sec:comp-incr}, we introduce a greedy rank-one algorithm (see e.g.
\cite{Cances2011,Falco2011}) for computing an approximation of $\widetilde P(u^k(\xi);\xi)^{-1}
\widetilde R(u^k(\xi);\xi)$ which  exploits the low-rank structure of the operator $\widetilde
P(u^k(\xi);\xi)$ and right-hand side $\widetilde R(u^k(\xi);\xi)$.   In
Section~\ref{sec:trunc-newt-solv}, we present the Newton-like truncated solver and we analyze its
convergence in the particular case of a standard Newton truncated solver, which is interpreted as
an inexact Newton algorithm~\cite{Dembo1982}. In Section~\ref{sec:numerical-ex}, numerical examples
illustrate the efficiency of the method.

\section{Approximation of residual and preconditioner with partially known
  low-rank structure}
\label{sec:appr-resid-prec}

In this section, we consider the approximation of the residual
$R(\xi) := R(u(\xi);\xi)$ and of the preconditioner $P(\xi):=
P(u(\xi);\xi)$
for a given $u(\xi)$. It is assumed that when $u(\xi)$ admits a given representation of the form
$u(\xi) = \sum_{i=1}^m v_i \lambda_i(\xi)$, the
residual and the preconditioner also admit representations of the form
\begin{equation}
  \label{eq:residual_exact_aff_dec}
  R(\xi) = \sum_{i=1}^{s} g_i\gamma_i(\xi) \quad \text{and} \quad P(\xi) =
  \sum_{i=1}^{p} F_i \phi_i(\xi),
\end{equation}
where the vectors $g_i \in \bR^N$ and matrices $F_i \in \bR^{N\times N}$ are not known but where the  real-valued functions $\gamma_i(\xi)$ and $\phi_i(\xi)$ are known.

We follow~\cite{Casenave2014} in order to construct an approximation of $R(\xi)$ and $P(\xi)$ based
on the knowledge of $\gamma(\xi):= (\gamma_i(\xi))_{i=1}^s$ and $\phi(\xi) :=
(\phi_i(\xi))_{i=1}^p$ and  a minimal number of evaluations of $R(\xi)$ and $P(\xi)$ at some
suitable points in $\Xi$. Note that the knowledge of vectors $\{g_i\}_{i=1}^{s}$ and matrices
$\{F_i\}_{i=1}^p$ is not required, hence this \emph{weakly intrusive} denomination. Here, the
novelty lies in a rigorous control of the error.  The strategy is presented for the approximation
of the residual. The application to the approximation of the preconditioner is straightforward.
\bigskip

Let us assume that an interpolation $\cI_r[\gamma](\xi) $ of $\gamma(\xi)$ is
available in the form
\begin{equation}
 \cI_r[\gamma](\xi) = \sum_{j=1}^r \gamma(\xi^\star_j) \alpha_j(\xi), \label{Irgamma}
\end{equation}
where the $\xi^\star_j$ are some interpolation points in $\Xi$ and the $\alpha_j(\xi)$ are real-valued functions satisfying the interpolation property
\begin{align}
\alpha_j(\xi_i^\star) = \delta_{i,j} \quad \text{for all} \quad 1\le i,j\le r.\label{alpha_interpolation_property}
\end{align}
We then obtain an approximation $\cI_r[R](\xi)$ of the residual $R(\xi)$ of the form
\begin{align*}
  \cI_r[R](\xi) = \sum_{i=1}^{s} g_i \cI_r[\gamma]_i(\xi) =  \sum_{i=1}^{s} g_i  \sum_{j=1}^r \gamma_i(\xi^\star_j) \alpha_j(\xi) =   \sum_{j=1}^r R(\xi_j^\star) \alpha_j(\xi) ,
\end{align*}
which is an interpolation of $R(\xi)$ at points $\{\xi_j^\star\}_{j=1}^r$.
Let $\Vert \cdot \Vert$ be a norm in $\bR^N$ associated with an inner product
$\innert{\cdot}{\cdot}$. The interpolation error on the residual is
\begin{align}
  \normt{R(\xi)-\cI_r[R](\xi)} =  \normt{\gamma(\xi)-\cI_r[\gamma](\xi)}_W,\label{eq:eim_err_res}
\end{align}
where $W  = (\innert{g_i}{g_j})_{1\le i,j\le s} \in \bR^{{s}\times {s}}$ is the Gram matrix of the set of vectors
$\{g_i\}_{i=1}^{s}$, and
$\normt{\cdot}_W$ is the semi-norm in $\bR^s$ induced by $W$, defined by $\Vert x \Vert_W^2 = x^T
Wx$. Therefore, in order to obtain a sharp control of the error of interpolation  of $R(\xi)$, the
error of interpolation of $\gamma$ has to be controlled with respect to the semi-norm $\Vert
\cdot\Vert_W$ and not the standard Euclidean norm in $\bR^s$. We will then propose a mean to
compute the Gram matrix $W$ with less than $r$ evaluations of the residual $R(\xi)$, and an
empirical interpolation method for the construction of an interpolation $\cI_r[\gamma]$ controlled
with respect to the semi-norm $\Vert \cdot \Vert_W$.

\subsection{Computation of the Gram matrix}
\label{sec:exact-eval-g}

Let $\Xi = \{\xi_k\}_{k=1}^Q$ and assume $s\le Q$.
The Gram matrix $W$ of the set of vectors $\{g_i\}_{i=1}^{s}$ is equal to
$$
W = G^T MG,
$$
where $G \in \bR^{N\times s}$ is the matrix whose columns are the vectors $\{g_i\}_{i=1}^{s}$, and where $M \in \bR^{N\times N}$ is the symmetric positive definite matrix associated with the chosen residual norm $\Vert \cdot \Vert$ in $\bR^N$. Therefore, it remains to compute the matrix $G$.
Let $\Gamma \in \bR^{s\times Q}$ and $\fR\in\bR^{N\times Q}$ be the matrices whose columns are the evaluations of $\gamma(\xi)$ and $R(\xi)$ respectively, i.e.
\begin{equation*}
  \Gamma  = [\gamma(\xi_1),\hdots,\gamma(\xi_Q)]  \quad
  \text{and} \quad \fR = [R(\xi_1),\hdots,R(\xi_Q)],
\end{equation*}
such that
$$
\fR = G \Gamma$$
holds. If the rank of $\Gamma$ is not $s$, then we can find a factorization $\Gamma = L \widetilde
\Gamma$ where the matrix $\widetilde \Gamma \in \bR^{\tilde s \times Q}$ has full rank $\tilde s<s$
(e.g.\ using SVD or QR factorization) and write $\fR = \widetilde G  \widetilde \Gamma$, with  $\widetilde G = GL$. Therefore, without loss of generality, we now assume that $\Gamma$ has a rank $s$.

% Let $\xi'_1,\hdots,\xi'_s$ be the samples associated with $s$ linearly
% independent columns of $\Gamma$
The rank of $\Gamma$ being $s$, we can find a sample $\{\xi_i'\}_{i=1}^s$ such that the vectors $\{\gamma(\xi_i')\}_{i=1}^s$ are linearly independent.
Let $\Gamma'\in \bR^{{s}\times
  {s}}$ (resp. $\fR' \in
  \bR^{N\times {s}}$) be the submatrix of
$\Gamma$ (resp. $\fR$) associated
with these samples,
\begin{align*}
  \Gamma' &= [\gamma(\xi'_1), \hdots, \gamma(\xi'_{s})]   \quad
  \text{and} \quad \fR' = [R(\xi'_1),\hdots,R(\xi'_{s})].
\end{align*}
$\Gamma'$ is thus invertible, and
\begin{equation*}
  G = \fR' \left(\Gamma'\right)^{-1},
\end{equation*}
so that $s$ evaluations of $R(\xi)$ are sufficient to compute $G$, and then the Gram matrix $W = G^T M G$.

\subsection{Empirical interpolation method}
\label{sec:empir-interp-meth}

Here, we present the EIM for the construction of an interpolation $\cI_r[\gamma](\xi)$ of the vector-valued function $\gamma(\xi)$, with a control of the approximation error in the semi-norm $\Vert\cdot\Vert_W$.
The interpolation $\cI_r[\gamma](\xi)$ has the form~\eqref{Irgamma}, where the functions  $\alpha_j(\xi)$  are defined
such that for any $\xi\in \Xi$, $(\cI_r[\gamma])_i(\xi) = \gamma_i(\xi)$ for a collection of indices $\{i_j\}_{j=1}^r$, i.e.
\begin{equation}
  \label{eq:eim_reduced_problem}
   \sum_{j=1}^r \gamma_i(\xi^\star_j) \alpha_j(\xi)= \gamma_i(\xi) , \quad \forall i \in \{i_1, \hdots, i_r\}.
\end{equation}
For the selection of the interpolation points and indices, we use a greedy algorithm~\cite{Barrault2004} which generates a sequence of pairs
 $\{(\xi_r^\star,i_r)\}_{r\ge 1}$ defined recursively by
\begin{align}
  \xi_{r+1}^\star &\in \arg\max_{\xi\in\Xi}
  \normt{\gamma(\xi)-\cI_r[\gamma](\xi)}_W,\notag \\
  \label{eq:struct_i_r_range}
  \text{and} \quad i_{r+1} &\in \arg\max_{i \in \{1,\hdots,s\}}
                             |\gamma_i(\xi_{r+1}^\star)-(\cI_r[\gamma])_i(\xi_{r+1}^\star)|,
\end{align}
where for $r=0$, we use the convention $\cI_0[\gamma]=0$. The algorithm is stopped when $\cI_r[\gamma] = \gamma$, which occurs for some $r \le \min\{s,Q\}$.
This construction ensures that the linear system of equations~\eqref{eq:eim_reduced_problem}
is invertible for any $\xi \in \Xi$, and in particular, it ensures that the interpolation
property~\eqref{alpha_interpolation_property} is satisfied. If the algorithm is stopped when $r$ is
such that
 \begin{equation}
  \label{eq:eim_norm_eps}
  \max_{\xi\in\Xi} \normt{\gamma(\xi)-\cI_r[\gamma](\xi)}_W \le \zeta,
\end{equation}
it yields an interpolation of the residual such that
$\Vert R(\xi) - \cI_r[R](\xi) \Vert \le \zeta$. The cost of the algorithm for $r$ iterations is $O(Qr^4 + s r^3  +Qrs(s+r)).$
\begin{remark}
We emphasize that the standard EIM applied to $R(\xi)$ should have required the evaluation of the residual
 $R(\xi)$ for all $\xi\in \Xi$ ($Q$ evaluations), while the proposed approach
requires  the values of
$\gamma(\xi)$ for all $\xi\in \Xi$ and only $s$ evaluations of the residual, where $s$ is the rank of
$R$.
\end{remark}

\begin{remark}
    The strategy presented in Sections~\ref{sec:exact-eval-g} and~\ref{sec:empir-interp-meth} can
    be directly applied for the interpolation of the preconditioner, with an error control with
    respect to a matrix norm associated with an inner product, such as the Frobenius norm.
    Note that, controlling the error with respect to such a norm does not allow a sharp control of
    the error in subordinate matrix norms.
\end{remark}

\section{Approximation of residuals and preconditioners with unknown
low-rank structures}
\label{sec:appr-struct-resid}

In this section, we consider the approximation of the residual and the preconditioner
without a priori knowledge on their representations as parameter-dependent algebraic quantities.
We assume that if $u(\xi)
= \sum_{i=1}^m v_i \lambda_i(\xi)$, then the residual $R(\xi) := R(u(\xi);\xi)$ and the
preconditioner $P(\xi):=P(u(\xi);\xi)$ are
well-approximated in low-rank format, i.e.
\begin{equation*}
  R(\xi) \approx \sum_{i=1}^r g_i \gamma_i(\xi) \quad \text{and} \quad P(\xi)
  \approx \sum_{i=1}^p F_i\phi_i(\xi),
\end{equation*}
with moderate ranks $r$ and $p$. However, we have no (even partial) information on these low-rank representations.

First, we present the strategy for interpolating the residual.
Then a statistical error bound is derived for the a posteriori control of the approximation
error. Finally, the method is extended to the interpolation of the preconditioner.

\subsection{Interpolation of the residual}
\label{sec:appr-strat}

We here use a randomized version of the EIM, which is called adaptive
cross approximation with partial pivoting in other contexts
\cite{Bebendorf2014}, for the construction of a sequence of interpolations
 of $R(\xi)$ of the form
\begin{equation}
 \cI_r[R](\xi) = \sum_{j=1}^r R(\xi^\star_j) \alpha_j(\xi), \label{IrR}
\end{equation}
where the $\xi^{\star}_j$ are interpolation points in $\Xi$ and the $\alpha_j(\xi)$ are real-valued
functions satisfying the interpolation property
$\alpha_k(\xi^\star_j)=\delta_{j,k}\ ,1\le j,k\le r$. These functions are defined such that
 for any $\xi\in \Xi$, $(\cI_r[R])_i(\xi) = R_i(\xi)$ for a collection of indices $\{i_j\}_{j=1}^r$, i.e.
\begin{equation}
  \label{eq:residual_aca}
   \sum_{j=1}^r R_i(\xi^\star_j) \alpha_j(\xi)= R_i(\xi) , \quad \forall i \in \{i_1, \hdots, i_r\}.
\end{equation}
The strategy differs from the one of Section~\ref{sec:empir-interp-meth} for the selection
 of interpolation points.
 Here, given $\{(i_j,\xi^\star_j)\}_{j=1}^r$ and the corresponding interpolation  $\cI_r[R]$, we select uniformly at random the point $\xi_{r+1}^\star$ in
 $\Xi\setminus \{\xi_k^\star\}_{k=1}^r$. If $R(\xi^\star_{r+1})-\cI_r[R](\xi^\star_{r+1})=0$, then the point is rejected and a new candidate point $\xi^\star_{r+1}$ is randomly generated.  If $R(\xi^\star_{r+1})-\cI_r[R](\xi^\star_{r+1})\neq 0$, an associated index $i_{r+1}$ is selected such that
\begin{equation}
  \label{eq:blind_i_r_range}
  i_{r+1} \in \arg\max_{i \in \{1,\hdots,N\}} | R_i(\xi_{r+1}^\star)-\cI_r[R]_i(\xi_{r+1}^\star) |.
\end{equation}
The selection of interpolation points does not satisfy an optimality condition but in contrast to  standard EIM, it does not require the evaluation of $R(\xi)$ for all $\xi$.
The condition  $R(\xi^\star_{r})-\cI_{r-1}[R](\xi^\star_{r})\neq 0$ ensures that the system of equations~\eqref{eq:residual_aca} admits a unique solution (see~\cite{Bebendorf2014}).
Assuming that the number of rejections is $o(1)$, the cost of the first $r$ iterations of this algorithm is $O(r^4+Nr^2)$.
\begin{remark}
  Note that the cost of interpolation can be drastically reduced when
   the structure of the residual is partially known (see Section~\ref{sec:appr-resid-prec}).%~\ref{sec:appr-struct-resid}).
   Equation~\eqref{eq:struct_i_r_range}
  requires the computation of $s$ entries of $\gamma$ (i.e.\
  $i_{r+1}\in\{1,\hdots,s\}$), while Equation~\eqref{eq:blind_i_r_range}
  requires $N$ components of the residual $R$ (i.e.\
  $i_{r+1}\in\{1,\hdots,N\}$).
\end{remark}

\subsection{Statistical error control}
\label{sec:error-control}

In order to certify the approximation, we provide a statistical bound for the error of interpolation  of $R(\xi)$, based on evaluations of some entries of $R(\xi)$. Let $(I_k)_{k \in \bN}$ (resp. $(\xi_k)_{k\in\bN}$)
be independent  random variables with values in $\{1,\hdots,N\}$ (resp. $\Xi$)
following the uniform law. Then the random variables $(X_k)_{k\ge 1}$ defined by
\begin{equation*}
  X_k = NQ(R_{I_k}(\xi_k)-(\cI_r)[R]_{I_k}(\xi_k))^2
\end{equation*}
are independent and identically distributed. By the law of large numbers, the random variable
$Y_M = \frac{1}{M} \sum_{k=1}^M X_k$ converges almost surely to
$\bE(X_k) =
\sum_{\xi\in\Xi}\normt{R(\xi)-\cI_r[R](\xi)}_{2}^2 =  \normt{R-\cI_r[R]}_{F}^2$ as $M\to \infty$, i.e.  $Y_M$ is a convergent and unbiased statistical estimation  of the square of the interpolation error with respect to the Frobenius norm.

Let $\sigma_M^2$ be the statistical estimation of the variance of $X_k$, defined by
\begin{equation*}
\sigma_M^2 = \frac{1}{M-1} \sum_{k=1}^M (X_k-Y_M)^2.
\end{equation*}
The random variable
\begin{equation*}
  \frac{Y_M - \normt{R-\cI_r[R]}_{F}^2}{\frac{\sigma_M}{\sqrt{M}}}
\end{equation*}
converges in law to a random variable $T_M$ having the Student's t-distribution with $M-1$ degrees of
freedom, as $M\to \infty$. Letting $t_{\alpha,M}\ge 0$ be such that $\bP(T_M \le t_{\alpha,M}) = \bP(T_M\ge -t_{\alpha,M}) = 1- \alpha $, and
%\begin{equation*}
%  \bP\left(\frac{Y_M - \Vert{R-\cI_r[R]}\Vert_{F}^2}{\sigma_M/\sqrt{M}} \ge - t_{\alpha,M}
%    \right)    \underset{M\to \infty}{\longrightarrow} \bP(T_M \ge -t_{\alpha,M}) = 1-\alpha.
%\end{equation*}
%Letting
\begin{align}
e_{M,\alpha}^2 = {Y_M + t_{\alpha,M}
  \frac{\sigma_M}{\sqrt{M}}}, \label{eq:ema}
\end{align}
we then have
\begin{align*}
  &\bP\left(\norm{R-\cI_r[R]}_{F} \le e_{M,\alpha} \right) \underset{M\to \infty}{\longrightarrow} \bP(T_M \ge -t_{\alpha,M}) = 1-\alpha, \end{align*}
which means that  $e_{M,\alpha}$ is an asymptotic upper bound with confidence level $1-\alpha$ for the interpolation error.

\subsection{Interpolation of the preconditioner}
\label{sub:interpo_precond}

We define an interpolation $\cI_r[P](\xi)$ of the operator $P(\xi)$ of the form
\begin{equation}
 \cI_r[P](\xi) = \sum_{k=1}^r P(\xi^\sharp_k) \beta_k(\xi), \label{IrP}
\end{equation}
where the $\xi^\sharp_k$ are interpolation points in $\Xi$ and the   $\beta_k(\xi)$ are real-valued functions satisfying the interpolation property $\beta_k(\xi^\sharp_l)=\delta_{k,l}$, $1\le k,l\le r$.
These functions are defined such that for all $\xi \in \Xi$,  $(\cI_r[P])_\alpha(\xi) = P_\alpha(\xi)$ for a subset of pairs of indices $\cA_r = \{ \alpha_k = (i_k,j_k)\}_{k=1}^r \subset \cA := \{1,\hdots,N\}^2$, i.e.
\begin{equation}
  \label{eq:precond_interp}
  \sum_{k = 1}^r P_{\alpha}(\xi^\sharp_k) \beta_k(\xi) = P_{\alpha}(\xi), \quad \forall
 \alpha \in \cA_r.
\end{equation}
For the selection of the interpolation points and corresponding entries of matrices, we use again a greedy strategy. Given $\{\xi^\sharp_k\}_{k=1}^r$ and $\{ \alpha_k\}_{k=1}^r$, we select $\xi^\sharp_{r+1}$ at random
in $\Xi\setminus \{\xi^\sharp_k\}_{k=1}^r$ (until $P(\xi^\sharp_k) - \cI_r[P](\xi^\sharp_k) \neq 0$) and we determine a corresponding pair of indices $\alpha_{r+1} = (i_{r+1},j_{r+1})$ such that
\begin{align*}
  \alpha_{r+1} \in \arg\max_{\alpha \in \cA } \vert P_{\alpha}(\xi_{r+1}^\sharp)-\cI_r[P]_{\alpha}(\xi_{r+1}^\sharp) \vert.
\end{align*}
A statistical control of the error in the Frobenius norm
can be obtained as in Section~\ref{sec:error-control}, with random variables $X_k$ replaced by
\begin{equation}
    \label{eq:error_stat_precond}
    X_k = N^2Q(P_{A_k}(\xi_k)-\cI_r[P]_{A_k}(\xi_k))^2,
\end{equation}
where $(A_k)_{k\in\bN}$ are independent random variables with values in $\cA $ and with uniform
law. For sparse parameter-dependent matrices $P(\xi)$ such that
$P_\alpha(\xi)=0$ for all $\alpha
\in \cA_0$ and all $\xi\in \Xi$, the random variables $A_k$ are taken uniform on $\cA\setminus
\cA_0$ and $N^2$ in Equation~\eqref{eq:error_stat_precond} is replaced by $\#\cA\setminus\cA_0$.

\section{Computation of the iterates}
\label{sec:comp-incr}

Sections~\ref{sec:appr-resid-prec} and~\ref{sec:appr-struct-resid} provide two alternatives for computing low-rank approximations $ \widetilde R(u^k(\xi);\xi):= R(\xi)$ and $ \widetilde P(u^k(\xi);\xi):=P(\xi)$ of the residual $R(u^{k}(\xi);\xi)$ and preconditioner $ P(u^{k}(\xi);\xi)$ at iteration $k$ of the Newton-type algorithm,
\begin{equation*}
  R(\xi) = \sum_{i=1}^{r_R} R(\xi_i^\star) \alpha_i(\xi) \quad \text{and} \quad
  P(\xi) =   \sum_{i=1}^{r_P} P(\xi_i^\sharp)\beta_i(\xi).
\end{equation*}
Here,
 we present an algorithm which exploits these low-rank representations
 for efficiently computing an approximation of the increment $\Delta u(\xi)$, solution of the
 following equation
 \begin{equation}
     \label{eq:equation_increment}
     P(\xi) \Delta u(\xi) = R(\xi).
 \end{equation}
The proposed algorithm is a greedy rank-one algorithm~\cite{Cances2011,Falco2011} which provides a sequence of approximations $(\Delta u_r(\xi))_{r\ge 1}$ with increasing ranks, defined by
\begin{equation*}
  \Delta u_r(\xi)  = \Delta u_{r-1}(\xi) + w_r \theta_r(\xi),
\end{equation*}
where $\Delta u_0=0$, and where the rank-one correction $w_r \theta_r(\xi)$  is the solution of the optimization problem
\begin{align}
\min_{w\in \bR^N,\theta \in \bR^\Xi} \sum_{\xi \in \Xi} \norm{P(\xi) w \theta(\xi) - R_r(\xi)}_M^2,
\label{optim-rank-one}
\end{align}
where \begin{align*}R_r(\xi) &= R(\xi) - P(\xi) \Delta u_{r-1} \\&= \sum_{i=1}^{r_R} R(\xi_i^\star) \alpha_i(\xi) - \sum_{i=1}^{r_P}\sum_{j=1}^{r-1} P(\xi_i^\sharp)w_j\beta_i(\xi)\theta_j(\xi)
:= \sum_{i=1}^{s} g_i \gamma_i(\xi),\end{align*}
and where the matrix $M$ (possibly parameter-dependent) defines a residual norm.
For the solution of~\eqref{optim-rank-one}, we use an alternating minimization
algorithm which consists in successively
\begin{itemize}
\item minimizing over $w \in \bR^N$, which yields the linear system of equations
$A w = b,
$
where
$$
A = \sum_{\xi\in \Xi} P(\xi)^T M P(\xi) \theta(\xi)^2 =  \sum_{i=1}^{r_P} \sum_{j=1}^{r_P} P(\xi_i^\sharp)^T
  M P(\xi_j^\sharp)(\sum_{\xi\in\Xi} \beta_i(\xi)\beta_j(\xi)\theta(\xi)^2),
$$
and
  \begin{align*}
b = \sum_{\xi\in \Xi} P(\xi)^T M R_r(\xi) \theta(\xi) = \sum_{i=1}^{r_P} \sum_{j=1}^s P(\xi_i^\sharp)^T M g_j (\sum_{\xi\in\Xi}  \beta_i(\xi)\gamma_j(\xi)\theta(\xi)),
\end{align*}
\item minimizing over $\theta \in \bR^\Xi$, which yields
  \begin{equation*}
  \theta(\xi) = \frac{w^T P(\xi)^T M R_r(\xi)}{w^T P(\xi)^T M P(\xi)w}, \quad
  \xi \in \Xi,
\end{equation*}
\end{itemize}
and iterating until convergence.

\begin{remark}
In the case where $P(\xi)$ is symmetric positive definite for all $\xi\in \Xi$, then we can choose
for $M$ the parameter-dependent matrix $M = P(\xi)^{-1}$. The optimization
problem~\eqref{optim-rank-one} defining the rank-one correction $w_r\theta_r(\xi)$ is then equivalent to
\begin{align}
    \min_{w\in \bR^N,\theta\in \bR^\Xi} \sum_{\xi\in \Xi} ( w\theta(\xi))^T P(\xi) w \theta(\xi)  -
    2 \sum_{\xi\in \Xi}  ( w\theta(\xi))^T R_r(\xi).\label{optim-rank-one-sym}
\end{align}
In the alternating minimization algorithm, the minimization over $w$ yields a system of equations $Aw=b$ with
$$
A = \sum_{\xi\in \Xi} P(\xi) \theta(\xi)^2 = \sum_{i=1}^{r_P}  P(\xi_i^\sharp) \sum_{\xi\in\Xi} \beta_i(\xi)\theta(\xi)^2,
$$
and
$$
b = \sum_{\xi\in \Xi} R_r(\xi) \theta(\xi) =  \sum_{j=1}^s g_i  \sum_{\xi\in \Xi} \gamma_j(\xi)\theta(\xi),
$$
and the minimization over $\theta$ yields
$$
\theta(\xi) = \frac{w^T R_r(\xi)}{w^T P(\xi)w}, \quad \xi \in \Xi.
$$

\end{remark}

\section{Truncated iterative solver}
\label{sec:trunc-newt-solv}

The proposed algorithm constructs a sequence of approximations $(u^k)_{k\ge 0}$ as follows, starting with $u^0=0$.
 At iteration $k$, we compute low-rank approximations $\widetilde R(u^k(\xi);\xi)$ and $\widetilde P(u^k(\xi);\xi)$ of $ R(u^k(\xi);\xi)$ and $ P(u^k(\xi);\xi)$ with one of the approaches presented in Sections~\ref{sec:appr-resid-prec} and~\ref{sec:appr-struct-resid}. Then, we compute
a low-rank approximation $\Delta u^{k}(\xi)$ of $\widetilde P(u^k(\xi);\xi)^{-1} \widetilde R(u^k(\xi);\xi))$
 with the greedy low-rank algorithm described in Section~\ref{sec:comp-incr}. Finally,  we define the next iterate by
 \begin{align}
u^{k+1} =  \Pi_\varepsilon( u^k+ \Delta u^{k}),\label{eq:truncated_solver}
\end{align}
where $\Pi_\varepsilon$ is a truncation operator such that $\Pi_\varepsilon(v)$ provides a low-rank approximation of a function $v(\xi)$ with a controlled precision $\varepsilon$ in $L^2$ norm, i.e.
$$
\sum_{\xi\in \Xi}\Vert \Pi_\varepsilon(v)(\xi) - v(\xi) \Vert^2 \le \varepsilon^2 \sum_{\xi\in\Xi}
\Vert v(\xi)\Vert^2,
$$
with a practical implementation relying on SVD\@. The truncation operator allows to avoid a blow-up in the representation ranks of the iterates.

%
%If the rank of $\Delta u^k$ is $r_k$, then the representation of $u^{k}(\xi)$
%requires $\sum_{\ell = 0}^{k-1} r_\ell$ terms in its low-rank expansion. In order to avoid any dramatic
%increase of the rank of the solution, we will use a truncated version of the
%solver, as in \cite{Kressner2011a,Ballani2013,Giraldi2014a} in the linear case.
%The idea is to use an EIM or a truncated SVD at each iteration to compress the current
%approximation. This approximation operator $\Pi_\varepsilon$ with precision
%$\varepsilon$ is then introduced in the iteration procedure such that
%\begin{align}
%  P(u^k(\xi); \xi) \Delta u^{k}(\xi) \approx R(u^k(\xi);\xi),\label{eq:truncated_solver}\\
%  u^{k+1}(\xi) = \Pi_\varepsilon(u^{k}(\xi) + \Delta u^{k}_{r_k}(\xi)). \notag
%\end{align}
%We can see that the proposed method requires the approximation of
%the residual, the preconditioner, the increment, and the iterate
%at each iteration of the solver. We provide
%in the next section a particular case where the convergence to the
%solution can be controlled.

Now, we analyze the proposed algorithm in the particular case of a Newton solver, where $P(u(\xi);\xi)=-R'(u(\xi);\xi)$, with $R'(u(\xi);\xi)$ the differential of $R(\cdot;\xi)$ at $u(\xi)$, and analyze the proposed algorithm as an inexact Newton method, following  Dembo et al.
\cite{Dembo1982}.  This will provide us guidelines to avoid unnecessary efforts in the  approximation of the different quantities (residual, preconditioner, increments and iterates). We first rewrite the truncated Newton algorithm in the  space
$(\bR^N )^\Xi $ equipped with the norm $\Vert{\cdot}\Vert$ defined by
$  \Vert{v}\Vert^2 = \sum_{\xi\in\Xi} \Vert{v(\xi)}\Vert^2,
$
where $\normt{v(\xi)}$ is the Euclidean norm of $v(\xi)$.
The parameter-dependent nonlinear system of equations is written
\begin{align*}
  \cR(u) :=  (R(u(\xi);\xi))_{\xi\in\Xi} = 0,
\end{align*}
where $\cR: (\bR^N )^\Xi \to (\bR^N )^\Xi $.
We denote by $\cR'(u)$ the differential of the residual $\cR(\cdot)$ at $u$, such that $\cR'(u)v = (R'(u(\xi);\xi)v(\xi))_{\xi\in \Xi}$ for $v\in (\bR^N)^\Xi$. $\cR'(u)$ is an element of the space of linear operators from $(\bR^N)^\Xi$ to $(\bR^N)^\Xi$, which we equip {with the operator norm
$\Vert M \Vert = \max_{v \in (\bR^N)^\Xi} \Vert M v \Vert/\Vert v\Vert$}.

%$\cR'(u)$, as  a linear operator from $(\bR^N )^\Xi$ to $(\bR^N )^\Xi$, can be identified with an element of the space $(\bR^{N\times N})^{\Xi\times \Xi}$, which we equip with the norm
%$
%  \norm{M}^2 = \sum_{\xi\in\Xi, \eta \in \Xi} \norm{M(\xi,\eta)}_F^2,
%$ where $\Vert \cdot\Vert_F$ is the matrix Frobenius norm.
%\red{With the definition of $\cR'$, for $v\in (\bR^N)^\Xi$, we have
%$$
%\Vert \cR'(v) \Vert^2 = \sum_{\xi\in \Xi} \Vert R'(u(\xi);\xi) \Vert_F^2.
%$$}
Then the algorithm
can be rewritten
\begin{align*}
  \widetilde \cR'( u^k)\Delta u^k &= -\widetilde \cR( u^k)
                                              + \widetilde r^k, \\
  u^{k+1} &=  u^k + \Delta u^k + e^k,
\end{align*}
where $\widetilde \cR(u^k)$ and $\widetilde \cR'(u^k)$ are approximations of  $ \cR(u^k)$ and $ \cR'(u^k)$ respectively, $\Delta u^k$ is the approximation of $\widetilde \cR'( u^k)^{-1}\widetilde \cR( u^k)$ computed with the greedy rank-one algorithm, $\widetilde r^k$ the associated residual, and $e^k$  represents the
error related to the truncation step.

In the following, we assume that for all $\xi\in\Xi$,
\begin{itemize}
\item[(A1)] there exists a unique solution $u(\xi)$ to $R(u(\xi);\xi) = 0$,
\item[(A2)] $R(\cdot;\xi)$ is continuously differentiable,
\item[(A3)] $R'(u(\xi);\xi)$ is invertible.
\end{itemize}
These assumptions respectively imply that there exists a unique solution to $\cR(u) = 0$, $\cR$ is continuously differentiable, and $\cR'(u)$ is invertible.

\begin{theorem}\label{th:inexact_rate}
  Assume that
  \begin{itemize}
  \item $u^k$ converges to the solution $u$,
  \item $R(\cdot; \xi)$ is Lipschitz continuous uniformly in
    $\xi$, i.e.\ there exists a constant $C>0$ independent of $\xi$ such that  for all $\xi \in \Xi$,
    \begin{align*}
      \norm{R(v; \xi) -R(w;\xi)} \le C \norm{v-w}, \quad \forall v, w \in
      \bR^N,
    \end{align*}
     \item For $k$ sufficiently large,
     $\cR'(u^k)$ is such that
     $$
    \alpha \Vert v \Vert \le  \norm{\cR'( u^k) v} \le \beta \Vert v \Vert ,\quad  \forall v \in
      \bR^N,
     $$
     for some constants $\alpha,\beta$ independent of $k$,
%  \item $R'(\cdot; \xi)$ is Lipschitz continuous uniformly in $\xi$, i.e. there exists a constant $C'>0$ independent on $\xi$ such that for all $\xi \in \Xi$,
%    \begin{align*}
%      \norm{R'(v; \xi) -R'(w;\xi)} \le C' \norm{v-w}, \quad \forall v, w \in
%      \bR^N,
%    \end{align*}
    \item $\widetilde \cR( u^k)$ (resp. $\widetilde \cR'(
      u^k)$) is an approximation of $\cR( u^k)$ (resp.
      $\cR'( u^k)$) such that
      \begin{equation*}
        \Vert {\widetilde \cR( u^k) - \cR( u^k)} \Vert \le
        \rho_k \quad \text{and} \quad \Vert{\widetilde \cR'( u^k) - \cR'( u^k)} \Vert\le
        \rho'_k.
      \end{equation*}
    \end{itemize}
   If $\rho_k$,
    $\Vert{e^k} \Vert$ and $\Vert{\widetilde r_k}\Vert$ are $o(\Vert{\cR(u^k)}\Vert)$ and $\rho_k'$ is $o(1)$, then $u^k$ converges to $u$ superlinearly. Furthermore, if $\rho_k$,
    $\Vert{e^k} \Vert$ and $\Vert{\widetilde r_k}\Vert$ are $O(\Vert{\cR(u^k)}\Vert^2)$ and
    $\rho_k'$ is $O(\Vert{\cR(u^k)}\Vert)$, then
    the sequence $u^k$ converges with order at least $2$.
\end{theorem}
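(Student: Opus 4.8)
The plan is to reduce the whole statement to a single scalar recursion for $\epsilon_k:=\Vert u^k-u\Vert$ of the form $\epsilon_{k+1}\le a_k\epsilon_k+b_k$, from which superlinear convergence ($a_k\to0$, $b_k=o(\epsilon_k)$) and order-two convergence ($a_k=O(\epsilon_k)$, $b_k=O(\epsilon_k^2)$) follow by inspection. To produce the recursion I would apply the exact differential $\cR'(u^k)$ to $u^{k+1}-u=(u^k-u)+\Delta u^k+e^k$, then substitute the defining relation $\widetilde\cR'(u^k)\Delta u^k=-\widetilde\cR(u^k)+\widetilde r^k$ and peel off the model errors via $\cR'\Delta u^k=\widetilde\cR'\Delta u^k+(\cR'-\widetilde\cR')\Delta u^k$ and $-\widetilde\cR(u^k)=-\cR(u^k)+(\cR(u^k)-\widetilde\cR(u^k))$. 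This yields
\begin{align*}
\cR'(u^k)(u^{k+1}-u) &= \big(\cR'(u^k)(u^k-u)-\cR(u^k)\big)+\big(\cR(u^k)-\widetilde\cR(u^k)\big)+\widetilde r^k\\
&\quad +\big(\cR'(u^k)-\widetilde\cR'(u^k)\big)\Delta u^k+\cR'(u^k)e^k.
\end{align*}
Taking norms, using $\Vert\cR'(u^k)v\Vert\ge\alpha\Vert v\Vert$ on the left and $\Vert\cR'(u^k)e^k\Vert\le\beta\Vert e^k\Vert$ on the right, it remains to control the five terms on the right-hand side.

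Four of them are immediate from the hypotheses: $\Vert\cR(u^k)-\widetilde\cR(u^k)\Vert\le\rho_k$, $\Vert\widetilde r^k\Vert$, $\Vert e^k\Vert$, and $\Vert(\cR'(u^k)-\widetilde\cR'(u^k))\Delta u^k\Vert\le\rho_k'\Vert\Delta u^k\Vert$, where I would bound $\Vert\Delta u^k\Vert\le\epsilon_{k+1}+\epsilon_k+\Vert e^k\Vert$ directly from the update formula and, since $\rho_k'\to0$, move the resulting $\rho_k'\epsilon_{k+1}$ to the left-hand side for $k$ large. The remaining (consistency) term is the one genuine Taylor estimate: from $\cR(u)=0$ and continuous differentiability, $\cR(u^k)=\big(\int_0^1\cR'(u+t(u^k-u))\,dt\big)(u^k-u)$, so $\Vert\cR'(u^k)(u^k-u)-\cR(u^k)\Vert\le\eta_k\epsilon_k$ with $\eta_k:=\sup_{t\in[0,1]}\Vert\cR'(u^k)-\cR'(u+t(u^k-u))\Vert$, and continuity of $\cR'$ together with $u^k\to u$ gives $\eta_k\to0$. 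Collecting terms,
\begin{align*}
(\alpha-\rho_k')\,\epsilon_{k+1}\le(\eta_k+\rho_k')\,\epsilon_k+\rho_k+\Vert\widetilde r^k\Vert+(\beta+\rho_k')\Vert e^k\Vert.
\end{align*}
Finally, uniform Lipschitz continuity of $R$ gives $\Vert\cR(u^k)\Vert\le C\epsilon_k$, so every $o(\Vert\cR(u^k)\Vert)$ hypothesis is $o(\epsilon_k)$ and every $O(\Vert\cR(u^k)\Vert^2)$ hypothesis is $O(\epsilon_k^2)$, and $\rho_k'=O(\Vert\cR(u^k)\Vert)=O(\epsilon_k)$.

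Dividing by $\alpha-\rho_k'\ge\alpha/2$ (for $k$ large) then closes the argument. In the superlinear case $\eta_k+\rho_k'\to0$ and $\rho_k+\Vert\widetilde r^k\Vert+(\beta+\rho_k')\Vert e^k\Vert=o(\epsilon_k)$, so $\epsilon_{k+1}/\epsilon_k\to0$; in the second case $\rho_k'=O(\epsilon_k)$ and the additive terms are $O(\epsilon_k^2)$, whence $\epsilon_{k+1}=O(\epsilon_k^2)$. I expect the main friction points to be: (i) the term $\rho_k'\Vert\Delta u^k\Vert$ — one must estimate $\Vert\Delta u^k\Vert$ a posteriori and absorb $\rho_k'\epsilon_{k+1}$, being careful that $\widetilde\cR'(u^k)$ need not be invertible (the increment comes from a greedy/least-squares solve), so one cannot write $\Delta u^k=\widetilde\cR'(u^k)^{-1}(\cdots)$; and (ii) for the order-two claim one needs $\eta_k=O(\epsilon_k)$, which is \emph{not} a consequence of the listed hypotheses (Lipschitz continuity of $R$ gives only continuity of $R'$), so I would add the natural extra assumption that $\cR'$ — equivalently each $R'(\cdot;\xi)$, uniformly in $\xi$ — is locally Lipschitz near $u$; without it the rate can genuinely drop below $2$, as the scalar example $R(u)=u+u\,|u|^{1/2}$ shows.
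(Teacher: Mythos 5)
Your proof is correct, and its core decomposition of the inexact-Newton residual
\begin{equation*}
r^k=\cR'(u^k)e^k+(\cR'(u^k)-\widetilde\cR'(u^k))\Delta u^k+\cR(u^k)-\widetilde\cR(u^k)+\widetilde r^k
\end{equation*}
is exactly the one the paper uses. The difference is in what you do with it: the paper bounds $\Vert r^k\Vert$ (estimating $\Vert\Delta u^k\Vert\le\Vert\cR'(u^k)^{-1}\Vert(\rho_k+\Vert\cR(u^k)\Vert+\Vert\widetilde r^k\Vert)$ via the spectral bounds $\alpha,\beta$) and then invokes Theorem~3.3 of Dembo--Eisenstat--Steihaug as a black box, whereas you inline that theorem's proof through the Taylor-remainder recursion $(\alpha-\rho_k')\epsilon_{k+1}\le(\eta_k+\rho_k')\epsilon_k+\dots$. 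Your route is longer but self-contained, and your a-posteriori bound $\Vert\Delta u^k\Vert\le\epsilon_{k+1}+\epsilon_k+\Vert e^k\Vert$ is arguably cleaner than the paper's, since it does not implicitly require $\widetilde\cR'(u^k)$ to be invertible or conflate it with $\cR'(u^k)^{-1}$ --- a legitimate concern given that $\Delta u^k$ comes from an approximate greedy solve. Your observation about the order-two claim is also well taken and applies equally to the paper: the quadratic-rate part of Dembo et al.'s Theorem~3.3 carries a proviso that $F'$ be H\"older continuous with exponent $1$ at the solution, which is not among the hypotheses listed in the theorem (uniform Lipschitz continuity of $R$ plus continuous differentiability gives only $\eta_k\to0$, not $\eta_k=O(\epsilon_k)$), so the paper is implicitly assuming local Lipschitz continuity of $R'(\cdot;\xi)$ uniformly in $\xi$; your explicit flagging of this, with the counterexample $R(u)=u+u\vert u\vert^{1/2}$, is a genuine improvement in rigor rather than a defect of your argument.
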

\begin{proof}
Letting $$s^k = u^{k+1}-u^k = \Delta u^k +e^k,$$
the algorithm can be rewritten as an inexact Newton solver
\begin{align*}
  \cR'(u^k)s^k &= -\cR(u^k) + r^k, \quad
  u^{k+1} = u^k + s^k,
\end{align*}
where  the residual $r^k = \cR'(u^k)s^k + \cR(u^k)$ has the following decomposition
\begin{align*}
r^k %&= \cR'(u^k)s^k + \cR(u^k)  \\%= \cR'(u^k)e^k + \cR'(u^k)\Delta u^k   \cR(u^k)\\
& = \cR'(u^k)e^k + (\cR'(u^k)- \widetilde \cR'(u^k))\Delta u^k +  \cR(u^k)-\widetilde \cR(u^k) + \widetilde r^k .
\end{align*}
Then
\begin{align*}
\Vert r^k \Vert &\le \Vert  \cR'(u^k) \Vert \Vert e^k \Vert +  \rho_k' \Vert \Delta u^k \Vert + \rho_k + \Vert \widetilde r^k \Vert,
\end{align*}
with
$$
\Vert \Delta u^k \Vert \le  \Vert  \widetilde\cR'(u^k)^{-1}  \Vert (\Vert \widetilde
\cR(u^k) \Vert + \Vert \widetilde r^k \Vert) \le \Vert   \widetilde\cR'(u^k)^{-1} \Vert  (\rho_k + \Vert \cR(u^k) \Vert + \Vert \widetilde r^k \Vert),
$$
where \begin{align*}\Vert  \widetilde\cR'(u^k)^{-1} \Vert &\le \Vert  \cR'(u^k)^{-1} \Vert + \Vert  \widetilde\cR'(u^k)^{-1} \Vert \Vert  \cR'(u^k)^{-1} \Vert \Vert \cR'(u^k)^{-1}  - \widetilde \cR'(u^k)^{-1}  \Vert 
\\
&\le \alpha^{-1} + \alpha^{-1} \rho'_k \Vert  \widetilde\cR'(u^k)^{-1} \Vert  . \end{align*} 
For $k$ sufficiently large, we have $\rho_k'\alpha^{-1} <1$, so that  $\Vert  \widetilde\cR'(u^k)^{-1} \Vert \le \frac{\alpha^{-1}}{1-\alpha^{-1}\rho'_k}$ and 
\begin{equation}
    \label{eq:bound_rk}
\Vert r^k \Vert \le \beta \Vert e^k \Vert +  \frac{\alpha^{-1}\rho_k'}{1-\alpha^{-1}\rho_k'}  (\rho_k + \Vert \cR(u^k) \Vert + \Vert \widetilde r^k \Vert) + \rho_k + (1+\alpha^{-1})\Vert \widetilde r^k \Vert.
\end{equation}
 If $\rho_k$,
    $\Vert{e^k} \Vert$ and $\Vert{\widetilde r_k}\Vert$ are $o(\Vert{\cR(u^k)}\Vert)$ and $\rho_k'$ is $o(1)$, then $\Vert r^k \Vert$ is $o(\Vert{\cR(u^k)}\Vert)$. If
     $\rho_k$,
    $\Vert{e^k} \Vert$ and $\Vert{\widetilde r_k}\Vert$ are $O(\Vert{\cR(u^k)}\Vert^2)$ and
    $\rho_k'$ is $O(\Vert{\cR(u^k)}\Vert)$, then $\Vert r^k \Vert$ is $O(\Vert{\cR(u^k)}\Vert^2)$.
    We then conclude by using~\cite[Th. 3.3]{Dembo1982}.
\end{proof}
%\red{The interesting point is that the values involved in Theorem
%\ref{th:inexact_rate} for the error control are direct inputs of the different
%algorithms. Indeed we can a priori set $\rho_k$ and $\rho'_k$ for the
%approximation of the residual and the preconditioner, $\normt{\widetilde r^k}$
%for the increment and $\normt{e^k}$ for the SVD after a statistical
%estimation of $\normt{\cR(u^k)}$ for instance.}
% \begin{remark}
%     \red{If we can ensure that $\normt{r^k} \le t \norm{\cR(u^k)}$ with $t<1$,
%         using for instance Equation~\eqref{eq:bound_rk}, then we can prove that the
%     scheme is locally convergent and the convergence rate is linear according
%     to~\cite[Th. 2.3]{Dembo1982}.}
% \end{remark}

Even though we provide guidelines to control the convergence rate of the
Newton algorithm, the computation of $\alpha$, $\beta$ and $\rho_k'$ is not a
simple task. It requires the ability to compute the largest and lowest singular
values of a linear operator on $(\bR^N)^\Xi$, and to ensure that the singular
values of $(\cR'(u^k))_{k\in\bN}$ are bounded by $\alpha$ and $\beta$.

\section{Numerical example}
\label{sec:numerical-ex}

\subsection{Diffusion with nonlinear reaction equation}
\label{sec:diff-nlreac}

\subsubsection{Problem setting}
\label{ssub:problem_setting}

Let $\Omega = (0,1)^2$. We want to solve for all $\xi \in \Xi$
the nonlinear PDE
\begin{align}
    \label{eq:cubic_reac}
    -\Delta u + \frac{\xi}{3} u^3 &= 1 \quad \text{on } \Omega,\\
  u &= 0 \quad \text{on } \partial \Omega,\notag
\end{align}
for $\Xi = (\xi_q)_{q=1}^Q$, a set of $Q=5000$ i.i.d.\ samples is drawn such that $\xi =
\exp(\zeta) - 1$, where the distribution of $\zeta$ is uniform between 0 and 10. The PDE is
discretized with a finite element method where the dimension of the approximation space is
$N=9801$.

Assume that an approximation of the solution $u(\xi) = \sum_{i=1}^m v_i \lambda_i(\xi)$ is
available. The strong form of the residual is
\begin{align*}
  R^{\text{strong}}(u(\xi);\xi)
  &= 1 + \Delta u - \frac{\xi}{3} vu^3 \\
  &= 1 + \sum_{i=1}^m \Delta v_i
    \lambda_i(\xi) - \frac{\xi}{3} \sum_{j=1}^m\sum_{k=1}^m\sum_{l=1}^m
    v_j v_k v_l\lambda_j(\xi)\lambda_k(\xi)\lambda_l(\xi) \\
  &= \sum_{i=1}^{1+m+m^3} \gamma_i(\xi) G_i.
\end{align*}
We can clearly see here that the evaluation of $\gamma$ only requires the knowledge of the
collection $(\lambda_i)_{i=1}^m$ and the structure of the equation. It is therefore computable
without having to sample the collection $(G_i)_{i=1}^{1+m+m^3}$. The considered preconditioner for
this problem is the Jacobian of the residual hence, a Newton solver is used for solving this
discretized equation. Given the low-rank structure of the solution, the preconditioner admits then
an expansion of the form
\begin{align*}
  P(\xi) = \sum_{i=1}^{1+m^2} P_i\phi_i(\xi),
\end{align*}
where $\phi_1(\xi) = 1$ comes from the diffusion term, while
$(\phi_i(\xi))_{i = 2}^{1 + m^2}$ are due to the cubic reaction term and
are of the form $\phi_i(\xi) = \xi \lambda_j(\xi)\lambda_k(\xi)$.

\subsubsection{Computation of the solution by exploiting the known low-rank structure}
\label{ssub:computation_of_the_solution_by_exploiting_the_known_low_rank_structure}

Given $\lambda$, the maps $\gamma$ and $\phi$ are explicitly known. As a consequence, the example
presented in this section fits the framework presented in Section~\ref{sec:appr-resid-prec} and we
are therefore able to solve this nonlinear problem in a weakly-intrusive manner, based on
evaluations of the residual and the preconditioner.

We use here the guidelines provided by Theorem~\ref{th:inexact_rate}. The residual is approximated
by the EIM such that
\begin{equation}
    \label{eq:error_quadratic}
    \norm{\cR(u) - \widetilde \cR(u)} \le \rho_\cR \norm{\cR(u)}^2.
\end{equation}
Note that the error control of the EIM is done according to the supremum norm
$\normt{\cdot}_\infty$ defined by $\normt{v}_\infty = \sup_{\xi\in\Xi}\normt{v(\xi)}$ as stated in
Equation~\eqref{eq:eim_norm_eps}. We therefore use the following
inequality to bound the norm of the residual
\begin{equation*}
    \norm{\cR(u)}^2 \le Q \norm{\cR(u)}_\infty^2.
\end{equation*}
We therefore set the tolerance of the EIM to
\begin{equation*}
    \norm{\cR(u)}_\infty \le \frac{\rho_R}{\sqrt{Q}}\norm{\cR(u)}^2,
\end{equation*}
such that Equation~\eqref{eq:error_quadratic} is satisfied.

Regarding the interpolation of the preconditioner, we arbitrarily set
\begin{equation*}
    \norm{\cP(u)-\widetilde\cP(u)}_{F} = \left(\sum_{\xi\in\Xi} \norm{P(u(\xi);\xi) - \widetilde
    P(u(\xi);\xi)}_F\right)^{1/2} \le \rho_{P} \norm{\cR(u)},
\end{equation*}
where $\normt{\cdot}_F$ denotes the Frobenius norm. Note that this condition implies that the
spectral norm of the error on the interpolation of the Jacobian is $\cO(\normt{\cR(u)})$ as
required in Theorem~\ref{th:inexact_rate}. In practice, $\rho_R$ and $\rho_{P}$ are set
to $10^{-2}$.

Concerning the tolerance parameters of the low-rank linear solver and the SVD truncation,
they are arbitrary set to $10^{-12}$ given that these methods are computationally cheap compared to
the approximation of the residual and the preconditioner. The low-rank solver is both controlled
with respect to the norm of the relative residual and the stagnation of the approximation.

The error estimate $\epsilon$ is given by
\begin{equation}
    \label{eq:error_residual}
  \epsilon(u)^2 = \frac{\sum_{\xi\in\Xi}
  \norm{\widetilde R(u(\xi);\xi)}^2}{\sum_{\xi\in\Xi}\norm{R(0;\xi)}^2},
\end{equation}
and the computational performance of the algorithm is assessed using the cumulative number of calls
to $R$ and $P$.

In Table~\ref{tab:diff-reac-values}, the values of the error indicator $\epsilon(u)$, as well as
the cumulative numbers of calls to $R$ and $P$ are given with respect to the number of iterations
of the global Newton solver. The normalized cost is defined as the ratio between the effective
number of calls to $R$ or $P$ and the number of calls required by a Monte-Carlo method with the entire sample $\Xi$.
First, the quadratic convergence of the Newton's method holds in this numerical experiment. As we
can see, the estimated relative residual goes from $10^{-5}$ to $10^{-10}$ between iterations 4 and
5, in agreement with the convergence rate predicted by Theorem~\ref{th:inexact_rate}. Moreover, the
table illustrates substantial computational gains. In particular, the proposed strategy requires
the assembly of $448$ residuals to solve the problem which corresponds to $1.18\%$ of the assembly
of the $25 000$ residuals requires for performing $5$ iterations for each sample in a Monte-Carlo
approach.  The gain is even more important for preconditioners, as the technique requires the
computation of only $4.40\text{\textperthousand}$ of the number of preconditioner evaluations
required by a Monte-Carlo method.

  Note that the cost of the method is cumulative, but normalized by the computational cost of the Monte-Carlo method that increases as well. As a consequence, the relative cost of computation of the residual (or preconditioner) can decrease between two iterations (e.g., see the cost of the construction of the residual between iterations 3 and 4).

  The number of calls to the residual (or preconditioner) are due to the error estimation and the evaluation of the Gram matrix introduced in Section~\ref{sec:exact-eval-g}. Indeed, $s$ samples are required to estimate the error which is an upper bound of the rank of the approximation of the residual. We can achieve a faster computation of the approximation of the residual by ignoring the structure of the residual if we have access to the computation of one entry of the residual, and using the statistical error control as illustrated in the next section.

\begin{table}[htb]
\centering
\caption{Error indicator, cumulative number of calls to $R$ and $P$ and normalized cost of the
assemblies compared to a Monte-Carlo method w.r.t.\  the number of iterations of the Newton's
solver for the solution of Problem~\eqref{eq:cubic_reac}.}
\label{tab:diff-reac-values}
\begin{tabular}{cccccc}
\toprule
\multirow{2}{*}{Iter.} & \multirow{2}{*}{$\epsilon(u)$} & \multicolumn{2}{c}{Residual}
                       &\multicolumn{2}{c}{Preconditioner} \\
\cmidrule(lr){3-4} \cmidrule(lr){5-6}
    & &\#Calls
    & Cost
    & \#Calls
    & Cost \\ \midrule
1& $2.40\times10^{-1} $ & $3$   & $6.00\times10^{-4}$ & $1$   & $2.00\times10^{-4}$ \\
2& $3.94\times10^{-2} $ & $41$  & $4.10\times10^{-3}$ & $3$   & $3.00\times10^{-4}$ \\
3& $2.27\times10^{-3} $ & $210$ & $1.14\times10^{-2}$ & $18$  & $1.20\times10^{-3}$ \\
4& $1.19\times10^{-5} $ & $375$ & $1.88\times10^{-2}$ & $65$  & $3.25\times10^{-3}$ \\
5& $4.07\times10^{-10}$ & $448$ & $1.18\times10^{-2}$ & $110$ & $4.40\times10^{-3}$ \\ \bottomrule
\end{tabular}
\end{table}

\subsubsection{Approximation without prior knowledge on the structure of the equation}
\label{ssub:approximation_without_prior_knowledge_on_the_structure_of_the_equation}

We consider the problem introduced in Section~\ref{ssub:problem_setting} where we ignore the prior
knowledge on the low-rank structure of the residual and the preconditioner. Therefore, the
strategy introduced in Section~\ref{sec:appr-struct-resid} is considered.

Regarding the tolerance values, the error is set to $10^{-12}$ for the low-rank linear solver and
the SVD truncation. Regarding the randomized EIM, the error is assessed with $M = Q =5000$ entries
and the confidence level is set to $\alpha=95\%$ for the approximation of the residual and the
preconditioner. Let $Z_M$ be defined by
\begin{equation*}
    Z_M^2 = \frac{NQ}{M}\sum_{k=1}^M R_{I_k}(\xi_k)^2,
\end{equation*}
where $(I_k)_k$ and $(\xi_k)_k$ are random variables defined in Section~\ref{sec:error-control}.
Then $Z_M$ is an estimator of $\normt{\cR(u)}$. Therefore, the convergence criterion on the
residual is set such that the algorithm stops when one of the following conditions is satisfied:
\begin{equation}
    \label{eq:error_crit_blind_newton}
    e_{M,\alpha} \le \rho_\cR Z_M^2, \quad \text{or} \quad \max_{1\le k \le
    M} \left| R_{I_k}(\xi_k) - \widetilde R_{I_k}(\xi_k)\right| \le 10^{-15},
\end{equation}
where $(I_k,\xi_k)_k$ are the random entries sampled for the error estimation. The condition on the
supremum norm of the error on the test set avoids excessive tolerances when realizations of
$Z_M^2$ is small. Moreover, the condition $M=Q$ means that the error estimator requires the evaluation of $Q$ entries of the residual. As a consequence, the computational cost of the error estimation is cheaper than a rank-one approximation of the residual.

The approximation of the Jacobian is controlled such that
\begin{equation*}
    e^P_{M,\alpha} \le \rho_{\cR} \norm{\widetilde \cR(u)},
\end{equation*}
where $e^P_{M,\alpha}$ is the upper bound on the error estimated with $M$ entries of the Jacobian
and a confidence level $\alpha$, as derived in Section~\ref{sec:error-control} in the case of the
residual.

Regarding performance measures, we consider both the error estimation $\epsilon(u)$ introduced in
Equation~\eqref{eq:error_residual} and a specific complexity measure. The complexities of the
solution are defined as the ratio of the cumulative number of evaluated entries of $R$ and $P$ and
the cumulative number of entries that should have been evaluated in the case of a Monte-Carlo
method. Note that the measure takes into account the entries used to assess the error and the
sparsity pattern of the preconditioner induced by the finite element as mentioned in
Section~\ref{sub:interpo_precond}.

Table~\ref{tab:diff-reac-blind} shows the error estimation and the complexities with respect to the
iteration of the Newton's solver. First, since the sample $\Xi$ and the initial guess $u^{0}=0$ are
identical to
Section~\ref{ssub:computation_of_the_solution_by_exploiting_the_known_low_rank_structure}, and since
the tolerances are stringent, we observe that the quantity $\epsilon(u)$ has the same convergence
than in Table~\ref{tab:diff-reac-values} and differs only for very small errors. The quadratic
convergence of the Newton method is also satisfied.

One notable difference with the method used in
Section~\ref{ssub:computation_of_the_solution_by_exploiting_the_known_low_rank_structure} is the
computational cost of the approach. While the cost was $1.18\%$ (resp. $4.40\%$) compared to the
Monte-Carlo method in term of residual (resp.\ preconditioner) evaluations, here the normalized
cost is only $6.22\text{\textperthousand}$ for the residual and $1.48\text{\textperthousand}$ for
the preconditioner in terms of entry evaluations.

\begin{table}[htb]
\centering
\caption{Error estimation, complexity and ranks for the solution of
Problem~\eqref{eq:cubic_reac} w.r.t.\ the iterations of the Newton's solver without exploiting the
structure of the residual and the preconditioner.}
\label{tab:diff-reac-blind}
\begin{tabular}{ccccccc}
  \toprule
    \multirow{2}{*}{Iter.} & \multirow{2}{*}{$\epsilon(u)$} & \multicolumn{2}{c}{Normalized cost} & \multicolumn{3}{c}{Rank} \\ 
    & & Residual & Preconditioner & $u$ & $\widetilde \cR$ & $\widetilde \cP $ \\\midrule
    1& $2.40\times10^{-1}$  & $8.08\times10^{-4}$ & $2.11\times10^{-4}$ & 1 & 1 & 1 \\
    2& $3.94\times10^{-2}$  & $1.81\times10^{-3}$ & $3.17\times10^{-4}$ & 7 & 7 & 2 \\
    3& $2.27\times10^{-3}$  & $2.15\times10^{-3}$ & $7.75\times10^{-4}$ & 9 & 10 & 6 \\
    4& $1.20\times10^{-5}$  & $5.64\times10^{-3}$ & $8.99\times10^{-4}$ & 9 & 81 & 5 \\
    5& $3.94\times10^{-10}$ & $6.22\times10^{-3}$ & $1.48\times10^{-3}$ & 7 & 94 & 10 \\
    \bottomrule
\end{tabular}

\end{table}

\subsection{Nonlinear diffusion equation}
\label{sec:nonl-diff}

We are interested now in a nonlinear diffusion equation defined on $\Omega = (0,1)^2$ for all
$\xi\in\Xi$ by
\begin{align}
    \label{eq:exp_diff}
  -\nabla\cdot (\exp(\xi u(\xi)) \nabla u(\xi)) &= 1 \quad \text{on } \Omega, \\
  u(\xi) &= 0 \quad \text{on } \partial \Omega.\notag
\end{align}
The sample $\Xi = (\xi_q)_{q=1}^Q$ is such that $Q = 5000$ and $\xi = \exp(\zeta) - 1$ where
$\zeta$ is drawn according to the uniform distribution between $0$ and $3$. The weak form of the
residual is given by
\begin{equation*}
  \inner{v}{R(u(\xi);\xi)} = \int_\Omega v \rmd x - \int_\Omega \exp(\xi
  u(\xi)) \nabla v \cdot \nabla u~ \rmd x,
\end{equation*}
and the Jacobian is
\begin{equation*}
  \inner{v}{R'(u(\xi),\xi)w} = - \int_\Omega \exp(\xi u) \nabla v \cdot \nabla
  w ~\rmd x - \int_\Omega \xi \exp(\xi u) (\nabla u \cdot \nabla v) w~\rmd x.
\end{equation*}
For the preconditioner, we will only consider the symmetric part of the $R'(u(\xi);\xi)$ (i.e.\ the first of the two terms) in order to improve the efficiency of the low-rank solver and avoid to treat
non-symmetric problems. The global solver is therefore a modified Newton's method. Due to the
exponential term, a low-rank expression of the residual or the preconditioner is not directly
available, we are therefore in the framework presented in Section~\ref{sec:appr-struct-resid}.

The mesh used for the finite element approximation is the same as
Section~\ref{ssub:problem_setting}. Regarding the tolerances, error estimation and complexity
estimation, we use the quantities defined in
Section~\ref{ssub:approximation_without_prior_knowledge_on_the_structure_of_the_equation} with the
difference that $\rho_R = 0.1$, $\rho_P = 0.1$ and that the tolerance on the approximation of the
residual is set such that
\begin{equation*}
    e_{M,\alpha} \le \rho_R Z_M,
\end{equation*}
the difference being that the upper bound is linear with $Z_M$ and not quadratic anymore.
As a consequence, the relative error on the approximation of the residual is of the order of $\rho_R$.

Table~\ref{tab:exp-diff} shows the efficiency of the method in terms of relative residual estimate
$\epsilon(u)$ and normalized complexities. The estimated error is $3.29\times10^{-9}$ after 8
iterations. This time the quadratic convergence does not hold because first the preconditioner is
not exactly the derivative of the residual and then the convergence of the error on the
interpolation of the residual is not quadratic anymore. We are nevertheless able to get a high
accuracy in terms of relative residual (reaching $3.28\times10^{-10}$) with a low computational
cost compared to a Monte-Carlo method. Indeed, the final complexity regarding the computation of
the residual and the preconditioner is similar to the computation of the solution of about 10
samples of the deterministic problem, i.e.\ the highest normalized cost between the residual and the
preconditioner is $2.03\text{\textperthousand}$.

\begin{table}[htb]
\centering
\caption{Error estimation, complexity and ranks for the solution of
Problem~\eqref{eq:exp_diff} w.r.t.\ the iterations of the solver.}
\label{tab:exp-diff}
\begin{tabular}{ccccccc}
    \toprule
    \multirow{2}{*}{Iter.} & \multirow{2}{*}{$\epsilon(u)$} & \multicolumn{2}{c}{Normalized cost} & \multicolumn{3}{c}{Rank} \\ 
    & & Residual & Preconditioner & $u$ & $\widetilde \cR$ & $\widetilde \cP $ \\\midrule
    1 & $1.41\times 10^{-1}$ &  $8.08\times 10^{-4}$ & $2.11\times 10^{-4}$ & 1 & 1 & 1 \\
    2 & $1.75\times 10^{-2}$ &  $7.57\times 10^{-4}$ & $6.34\times 10^{-4}$ & 6 & 2 & 5 \\
    3 & $1.79\times 10^{-3}$ &  $7.40\times 10^{-4}$ & $7.75\times 10^{-4}$ & 8 & 2 & 5 \\
    4 & $1.27\times 10^{-4}$ &  $8.07\times 10^{-4}$ & $8.99\times 10^{-4}$ & 8 & 3 & 6 \\
    5 & $1.14\times 10^{-5}$ &  $7.87\times 10^{-4}$ & $1.10\times 10^{-3}$ & 8 & 2 & 9 \\
    6 & $2.16\times 10^{-6}$ &  $8.74\times 10^{-4}$ & $1.27\times 10^{-3}$ & 8 & 4 & 10 \\
    7 & $2.77\times 10^{-7}$ &  $9.80\times 10^{-4}$ & $1.42\times 10^{-3}$ & 8 & 6 & 8 \\
    8 & $2.58\times 10^{-8}$ &  $1.25\times 10^{-3}$ & $1.48\times 10^{-3}$ & 8 & 7 & 13 \\
    9 & $2.44\times 10^{-9}$ &  $1.19\times 10^{-3}$ & $1.79\times 10^{-3}$ & 8 & 1 & 15 \\
    10& $3.28\times 10^{-10}$ & $1.68\times 10^{-3}$ & $2.03\times 10^{-3}$ & 7 & 10 & 15 \\
    \bottomrule
\end{tabular}

\end{table}

\section{Conclusion}
\label{sec:conclusion}

A framework for solving parameter-dependent nonlinear equations in a weakly intrusive manner is
proposed. The method requires first the fast approximation of the residual and the preconditioner
in order to be efficient. We show here that they can be interpolated in a weakly intrusive manner
thanks to an extensive use of the empirical interpolation method, in its vector or matrix variants. These interpolations
enables the use of an efficient greedy rank-one solver, which is used to compute the increments of
the solution at each iteration. Finally, the current solution is compressed at each iteration in
order to reduce its representation and the entire strategy is illustrated on numerical examples. A
convergence analysis is performed in the particular case of the Newton's solver, and the theory is
validated experimentally. The efficiency of the methods is illustrated on numerical examples.

This work is proof of concept and opens the way to more complex applications, in particular in
nonlinear mechanics. Indeed, the assembly of the residual and the preconditioner for such problems
represents the main part of the computational costs and the strategy proposed in this paper could be
suitable. The algorithm would be then comparable to the one proposed in~\cite{Farhat2014} where a
sparse integration methodology is used to reduced the assembly cost. The robustness of the method
when a large number of parameters is used should also be assessed in future work.

\bibliographystyle{plain}

%\printbibliography

\end{document}